\newtheorem{theorem}{Theorem}[section]
\newtheorem{thm}{Theorem}[section]
\newtheorem{lemma}[theorem]{Lemma}
\newtheorem{prop}[theorem]{Proposition}
\theoremstyle{definition}
\newtheorem{example}[theorem]{Example}
\theoremstyle{remark}
\newtheorem{remark}[theorem]{Remark}
\numberwithin{equation}{section}
\newcommand{\vcomp}{C_{c}(V)}
\newcommand{\lw}{{\ell^{2}_{\mu}}(V)}
\newcommand{\delswa}{\Delta_{\theta}}
\newcommand{\hmax}{H_{\max}}
\newcommand{\Dom}{\operatorname{Dom}}
\title[Perturbed bi-Laplacians on infinite graphs]{Self-adjointness of perturbed bi-Laplacians on infinite graphs}
\author{Ognjen Milatovic}
\address{Department of Mathematics
and Statistics \\ University of North Florida \\ Jacksonville, FL
32224 \\ USA.}
\email{omilatov@unf.edu}
\subjclass[2010]{35J10, 39A12, 47B25}
\begin{document}
\maketitle

\begin{abstract} We give a sufficient condition
for the essential self-adjointness of a perturbation of the square of the magnetic Laplacian on an infinite weighted graph.
The main result is applicable to graphs whose degree function is not necessarily bounded. The result allows perturbations that are not necessarily bounded from below by a constant.
\end{abstract}

\section{Introduction}
Spectral-theoretic properties of the Laplacian and Schr\"odinger operators on infinite graphs have
been a fruitful topic of research in recent years. In particular, the notion of (essential) self-adjointness has drawn quite
a bit of attention; see, for instance, the papers~\cite{vtt-11-2,vtt-11-3,HKMW, Jor-08, Keller-Lenz-10,Keller-Lenz-09,Milatovic-Truc,Torki-10}.
The most advanced results to date concerning the self-adjointness of (primarily lower semi-bounded) magnetic Schr\"odinger operators (acting
on vector bundles) over infinite (not necessarily locally finite) graphs are contained in the recent paper~\cite{MS-18}.
For further pointers to the literature on the self-adjointness of Laplace/Schr\"odinger operators on graphs,
we direct the reader to~\cite{Keller-15,LSW-16,LSW-17,MS-18}. Additionally, there is a line of investigation concerning Laplace-type operators acting on 1-forms, initiated in~\cite{Masamune-09} and continued in~\cite{AT-15,BGJ-15}.  Furthermore, the author of~\cite{CY-17} investigated the essential self-adjointness of the Laplacian on a 2-simplicial complex. In addition to self-adjoint operators on graphs, the non-self-adjoint ones have also come into the focus of researchers, as seen in the papers~\cite{ABTH-19} and~\cite{ABTH-20}.

In a recent article~\cite{G-Mug}, the authors explored, among other things, semigroup properties and well-posedness of the (linear) parabolic problems corresponding to the square of the Laplacian (also known as the bi-Laplacian) on discrete graphs and metric graphs. As indicated in section 1 of~\cite{G-Mug}, the bi-Laplacian operator on discrete graphs is naturally associated with the discrete variant of the beam equation. Concerning the discrete graph aspect of~\cite{G-Mug}, we should mention that the authors focused on finite graphs and (see remark 2.7 in~\cite{G-Mug}) infinite graphs with vertex weights and edge weights equal to $1$ and with uniformly bounded degree, which leads to a realization of the bi-Laplacian as a bounded self-adjoint operator on the corresponding $L^2$-space.

Working in the setting of (locally finite) infinite graphs, we consider a discrete bi-Laplacian whose self-adjoint realization is (generally) an unbounded operator. More precisely, we study the (essential) self-adjointness of $\delswa^2$, the square of the discrete magnetic Laplacian $\delswa$, perturbed by a real-valued potential $W$. The requirement imposed on the graph in our main result, Theorem~\ref{T:main-3}, incorporates the behavior of the weighted degree and vertex degree over the balls of (combinatorial distance) radius $n\in\mathbb{Z}_{+}$. As far as the perturbation $W$ is concerned, we require that it satisfy
$W(x)\geq -(q\circ r)(x)$, for all vertices $x$ of the graph, where $r(x)$ is the usual combinatorial distance of a vertex $x$ from a reference vertex $x_0$ and $q$ is a non-negative non-decreasing function such that $q(s)=O(s^{\alpha})$ with $0\leq\alpha\leq 1$.
Thus, we allow functions $W$ that are not necessarily bounded from below by a constant. In a certain sense, Theorem~\ref{T:main-3}
can be viewed as a discrete analogue of the self-adjointness result in~\cite{Mi-2017} for the perturbed bi-Laplacian on a geodesically
complete Riemannian manifold with Ricci curvature bounded from below by a (possibly unbounded) non-positive function depending on the distance from a reference point.

We should point out that the definition~(\ref{E:magnetic-lap}) below for $\delswa$ can be combined with $(\delswa^2u)(x):=\delswa((\delswa u)(x))$ to obtain a formula relating $\delswa^2$ and $\delswa$. Naturally, one may wonder if the (essential) self-adjointness of $\delswa^2$ can be deduced by using one of the existing (essential) self-adjointness results for $\delswa$ (or, perhaps, a Schr\"odinger operator $\delswa + W_1$, where $W_1$ is a function on the vertices of the graph). Unfortunately, a cumbersome nature of the relationship between $\delswa^2$ and $\delswa$, as one can already see in the special case $b(x,y)\in\{0,1\}$, $\mu(x)\equiv 1$, and $\theta(x,y)\equiv 0$, where $b$ and $\mu$ are, roughly speaking, the edge and the vertex weights respectively and $\theta$ is a phase function (see section~\ref{SS:setting} for definitions), hinders the application of the self-adjointness results for the discrete Laplacian/Schr\"odinger operator to the situation of a (perturbed) discrete bi-Laplacian.

The methodological core of the paper is located in section~\ref{S:prelim-est}, where key estimates are carried out.
For the purpose of localizing the problem, we borrowed the cut-off functions from the paper~\cite{Masamune-09}.
In contrast to Laplace (or Schr\"odinger) operators, the estimates in our situation are more involved partly because
the underlying quadratic form is ``driven" by the (magnetic) Laplacian, whose product rule produces one more term than
than the product rule for the ``first-order" differential. To make it easier to follow the presentation in section~\ref{S:prelim-est},
we put the two central estimates in Propositions~\ref{P:p-2} and~\ref{P:p-4} and the auxiliary ones in lemmas.

The article consists of six sections. We placed the notations and the statement of the main result in section~\ref{S:main}. The examples illustrating the main theorem are located in section~\ref{S:ex}. Subsequently, in section~\ref{S:mo-sa}, we stated the definition of the maximal operator associated to $\delswa^2 +W$. In the same section, we recalled the abstract self-adjointness criterion used in the proof of the main result. The preliminary estimates are contained in section~\ref{S:prelim-est}, and, lastly, the proof of the main result can be found in section~\ref{S:proof-result}.

\section{Main Results}\label{S:main}  We begin this section by describing the notations used in the rest of the article.
\subsection{The setting}\label{SS:setting}
Throughout the paper, the vertex set $V$ is a countably infinite set equipped with a measure $\mu\colon V\to (0,\infty)$. Additionally, $b\colon V\times V\to[0,\infty)$ is a function satisfying the following conditions:
\begin{enumerate}
  \item [(b1)] $b (x, y) = b (y, x)$, for all $x,\,y\in V$;
  \item [(b2)] $b(x,x)=0$, for all $x\in V$;
  \item [(b3)] $\textrm{deg}(x):=\displaystyle\sharp\,\{y\in V\colon b(x,y)>0\}<\infty$, where $\sharp\, G$ is understood as the number of elements in the set $G$.
\end{enumerate}

We will call two vertices $x,\, y\in V$  \emph{neighbors} and indicate the corresponding relationship by $x\sim y$ whenever the following property is satisfied: $b(x, y) > 0$. We will call the triple $(V,b,\mu)$ a \emph{locally finite weighted graph}, with the label ``locally finite" coming from property (b3). Two neighbors $x\sim y$ give rise to an (oriented) edge, which we will indicate by $e=(x,y)$ or $e=(y,x)$, and the set of (oriented) edges will be denoted by $E$.

In this article, $(V,b,\mu)$ is always considered to be \emph{connected}: any two vertices $x,\,y\in V$ can be joined by a path $\gamma$.
Here, by a path $\gamma$ we mean a finite sequence of vertices $x_0,\,x_1,\,\dots,x_n$  such that $x=x_0$, $y=x_n$,
and $x_{j}\sim x_{j+1}$ for all $0\leq j\leq n-1$. The (combinatorial) length of a path $\gamma$ is defined as the number edges in this path. The (combinatorial) distance $d(x,y)$ between vertices $x$ and $y$ is defined as the (combinatorial) length of the shortest path connecting the vertices $x$ and $y$. For simplicity, we will drop the words ``combinatorial" when referring to the length of a path or distance between vertices, as we will not be using any other length or distance notions in this paper.

Fixing a vertex $x_0\in V$ and $n\in\mathbb{Z}_{+}:=\{1,2,\dots\}$, we define
\begin{equation}\label{E:def-r}
r(x):=d(x_0,x),
\end{equation}
\begin{equation}\nonumber
B(x_0,n):=\{x\in V\colon r(x)\leq n\}\cup \{e\in E\colon e=(x,y),\, r(x)\leq n \textrm{ and }r(y)\leq n\}.
\end{equation}

We now describe the function spaces used in this paper. We start with $C(V)$--the set of complex-valued functions on $V$ and $\vcomp$--the set of finitely supported elements of $C(V)$. The symbol $\lw$ stands for the space of functions $f\in C(V)$ such that
\begin{equation}\label{E:l-p-def}
\|f\|^{2}:=\sum_{x\in V}\mu(x)|f(x)|^2<\infty,
\end{equation}
where $|\cdot|$ denotes the modulus of a complex number. Note that $\lw$ is a Hilbert space with the inner product
\begin{equation}\label{E:inner-w}
(f,g):=\sum_{x\in V}\mu(x)f(x)\overline{g(x)}.
\end{equation}

Before describing the operators used in this paper, we introduce a phase function $\theta \colon V\times V\to [-\pi,\pi]$, such that $\theta(x,y)=-\theta(y,x)$ for all $x\,,y\in V$. To simplify the notation in the sequel, we will write $\theta_{x,y}:=\theta(x,y)$.
We can now define the formal magnetic Laplacian $\delswa\colon C(V)\to C(V)$ on $(V,b,\mu)$ as follows:
\begin{equation}\label{E:magnetic-lap}
    (\delswa u)(x)=\frac{1}{\mu(x)}\sum_{y\in V}b(x,y)(u(x)-e^{i\theta_{x,y}}u(y)).
\end{equation}
If $\theta\equiv 0$ we get as a special case the formal Laplacian, which we denote by $\Delta$.
The square of the formal magnetic Laplacian $\delswa^2$ is called \emph{the magnetic bi-Laplacian} or \emph{magnetic biharmonic operator}.
The central object of study in this paper is the operator $H\colon C(V)\to C(V)$
\begin{equation}\label{E:magnetic-schro}
Hu:= \delswa^2 u +Wu,
\end{equation}
where $W\colon V\to \mathbb{R}$.

\subsection{Statement of the result}
For $n\in\mathbb{Z}_{+}:=\{1,2,\dots\}$, define
\begin{equation}\label{E:deg-n}
d_n:=\max_{x\in B(x_0,n)}(\textrm{deg}(x)),\qquad p_n:=\max_{x\in B(x_0,n)}\left(\max_{y\in V}b(x,y)\right),
\end{equation}
where $\textrm{deg}(x)$ is as in (b3). We are now ready to state the main result.
\begin{thm}\label{T:main-3} Assume that $(V, b, \mu)$ is a locally finite, weighted, and connected graph such that $\mu(x)\geq \mu_0$, where $\mu_0>0$ is some constant. Let $q\colon [0,\infty)\to (0,\infty)$ be a non-decreasing function such that $q(s)=O(s^{\alpha})$, for some $0\leq \alpha\leq 1$.  Let $H$ be as in~(\ref{E:magnetic-schro}) with $W\colon V\to \mathbb{R}$ satisfying
\begin{equation}\label{E:W-minorant-1}
W(x)\geq -q(r(x)),\quad \textrm{for all }x\in V,
\end{equation}
where $r(x)=d(x_0,x)$. In the case $0<\alpha\leq 1$, assume that $\{n^{\alpha-1}d_np_n\}$ is a bounded sequence. In the case $\alpha=0$, assume that there exist numbers $0<K<\frac{\mu_0}{2}$ and $N\in\mathbb{Z}_{+}$ such that
\begin{equation}\label{E:graph-assumption}
\frac{d_np_n}{n}\leq K,
\end{equation}
for all $n\geq N$. Then, $H$ is essentially self-adjoint on $\vcomp$.
\end{thm}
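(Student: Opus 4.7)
My plan is to apply the abstract self-adjointness criterion recalled in Section~\ref{S:mo-sa}, thereby reducing the essential self-adjointness of $H$ on $\vcomp$ to showing that every $u\in\lw\cap\Dom(H_{\max})$ satisfying $(\delswa^{2}+W+\lambda)u=0$ must vanish identically, for a suitable $\lambda$. The argument would then proceed by testing this equation against a cut-off deformation of $\overline{u}$ and using the Green (summation by parts) identity twice, since $H$ is of fourth order.

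The localization is carried out with the cut-off sequence $\{\phi_n\}\subset\vcomp$ borrowed from~\cite{Masamune-09}: real-valued functions with $0\le\phi_n\le 1$, $\phi_n\equiv 1$ on $B(x_0,n)$, $\supp\phi_n\subseteq B(x_0,2n)$, and a pointwise bound of order $1/n$ on $|\phi_n(x)-\phi_n(y)|$ for $x\sim y$. Pairing the equation with $\phi_n^{2}\overline{u}$ and summing against $\mu$ leads, after two applications of Green's identity, to a schematic identity of the form
\begin{equation}\label{E:plan-master}
\|\delswa(\phi_n u)\|^{2}+\sum_{x\in V}\mu(x)W(x)|\phi_n(x)u(x)|^{2}+\lambda\|\phi_n u\|^{2}=R_n,
\end{equation}
where $R_n$ collects all the commutator contributions produced by moving $\phi_n^{2}$ past $\delswa^{2}$.

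The heart of the proof is the control of $R_n$, and this is precisely the role of Propositions~\ref{P:p-2} and~\ref{P:p-4}. Because the product rule for $\delswa^{2}$ produces one extra mixed contribution compared with the first-order Laplacian, $R_n$ decomposes into terms governed by $\|\delswa\phi_n\|_{\infty}$, the $\lw$-mass of $u$ on the annulus $B(x_0,2n)\setminus B(x_0,n)$, and the analogous mass of $\delswa u$ there. The assumption $\mu(x)\ge\mu_0$ makes $\|\delswa\phi_n\|_{\infty}$ no larger than a constant multiple of $d_n p_n/(\mu_0 n)$, and the geometric hypothesis $n^{\alpha-1}d_n p_n=O(1)$ (respectively $d_n p_n/n\le K<\mu_0/2$ when $\alpha=0$) is exactly matched to the growth $q(s)=O(s^{\alpha})$ of the negative-part bound on $W$. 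After a Cauchy--Schwarz step with a small absorption parameter $\varepsilon$, the part of $R_n$ that is proportional to $\|\delswa(\phi_n u)\|^{2}$ is absorbed into the left-hand side of~(\ref{E:plan-master}), and the remainder vanishes as $n\to\infty$.

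The main obstacle, as I see it, is the mixed commutator term involving $\delswa\phi_n\cdot\delswa u$: unlike its first-order analogue, it is not a pure boundary contribution, and it couples the annular behaviour of $u$ with that of $\delswa u$. Overcoming it requires an a priori annular bound on $\|\delswa u\|$ in $\lw$ extracted from the identity $(\delswa^{2}+W+\lambda)u=0$ itself, combined with the sharp constant $K<\mu_0/2$ in the case $\alpha=0$ (respectively, $\lambda$ chosen large enough in the case $0<\alpha\le 1$) so that the absorption step produces a strict inequality. Passing to the limit $n\to\infty$ in~(\ref{E:plan-master}) then yields $\lambda\|u\|^{2}\le 0$, hence $u\equiv 0$, completing the proof of essential self-adjointness.
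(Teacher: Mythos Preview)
Your plan has a genuine gap: you work with a \emph{real} spectral parameter $\lambda$ and aim for the conclusion $\lambda\|u\|^{2}\le 0$. This fails on two levels. First, since $W$ is not bounded below, $H$ is not semibounded, so showing $\ker(H_{\max}+\lambda)=\{0\}$ for some real $\lambda$ does \emph{not} establish essential self-adjointness; the abstract criterion in Section~\ref{S:mo-sa} requires the deficiency indices at $\pm\nu i$ to vanish. Second, even if the reduction were legitimate, your master identity~(\ref{E:plan-master}) carries the term $\sum_{x}\mu(x)W(x)|\phi_n(x)u(x)|^{2}$ on the left. Under the sole assumption $W\ge -q\circ r$ this term can be as negative as $-q(2n)\|\phi_n u\|^{2}\sim -n^{\alpha}\|\phi_n u\|^{2}$, and no fixed $\lambda$ can compensate it as $n\to\infty$. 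Your claim that ``the remainder vanishes as $n\to\infty$'' therefore cannot deliver $\lambda\|u\|^{2}\le 0$: the obstruction is the unbounded potential on the \emph{left}, not the commutator $R_n$ on the right.

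The paper's proof hinges precisely on a device that your outline omits. One works with the equation $(H\mp\nu i)u=0$ for $\nu\in\mathbb{R}\setminus\{0\}$ and pairs it with $\chi_n^{2}u$, but then splits into \emph{real} and \emph{imaginary} parts. The real part is used only to feed Proposition~\ref{P:p-4}, yielding an a~priori bound on $\|\delswa(\chi_n u)\|^{2}$ in terms of $\|u\|^{2}$ and $((q\circ r)\chi_n u,\chi_n u)$. The decisive step is the \emph{imaginary} part~(\ref{E:im-part}): because $W$ is real and $\|\chi_n\delswa u\|^{2}$ is real, both disappear, leaving $|\nu|\,\|\chi_n u\|^{2}$ dominated solely by the commutator terms from Lemma~\ref{E:l-3}. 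These are then controlled by a constant multiple of $n^{\alpha}\beta_n\|u\|^{2}$ (via Propositions~\ref{P:p-2} and~\ref{P:p-4}), which under the hypotheses is \emph{bounded uniformly} in $n$---not tending to zero. One obtains $|\nu|\,\|u\|^{2}\le K_4\|u\|^{2}$ for all large $n$, and then chooses $|\nu|>K_4$ to force $u=0$. The sharp constant $K<\mu_0/2$ in the case $\alpha=0$ enters not at this final absorption but earlier, to guarantee $\beta_n<1$ so that Proposition~\ref{P:p-4} closes.
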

\begin{remark}\label{R:boundedness} In the case $0<\alpha\leq 1$, boundedness of the sequence $\{n^{\alpha-1}d_np_n\}$ implies that $\frac{d_np_n}{n}\to 0$ as $n\to\infty$, which guarantees the fulfilment of~(\ref{E:graph-assumption}) with $0<K<\frac{\mu_0}{2}$.
\end{remark}

\section{Examples}\label{S:ex}
As an illustration of Theorem~\ref{T:main-3}, in this section we provide some examples.
\begin{example}\label{ex-1} We consider a graph with vertices $V=\{0,1,2,\dots\}$ and edges $k\sim k+1$, for all $k\in V$.
In this example, we assign the edge weights $b(k,k+1)=1$ and vertex weights $\mu(k)=1$, for all $k\in V$. Let $n\in\mathbb{Z}_{+}$ and let $B(x_0,n)$ be as in section~\ref{S:main} with $x_0=0$. Referring to~(\ref{E:deg-n}), we see that in this example $d_n=2$, $p_n=1$,   and~(\ref{E:graph-assumption}) is satisfied. Thus, by Theorem~\ref{T:main-3}, the operator $\delswa^2$ is essentially self-adjoint on $C_{c}(V)$. Define a function $q\colon [0,\infty)\to(0,\infty)$  by the formula $q(s):=s+1$,
and a function $W\colon V\to \mathbb{R}$ by the formula $W(k):=-k$. We see that $q(s)$ is non-decreasing and $q(s)=O(s^{\alpha})$ with $\alpha=1$. Moreover, the property~(\ref{E:W-minorant-1}) is satisfied for all $x\in V$.
Finally, we observe that the sequence $\{n^{\alpha-1}d_np_n\}=\{2\}$ is bounded.  Therefore, by Theorem~\ref{T:main-3} the operator $ \delswa^2 u +Wu$ is essentially self-adjoint on $C_{c}(V)$.
\end{example}

\begin{example}\label{ex-2} We consider a graph with the same vertices and edges as in Example~\ref{ex-1}. We assign the edge weights $b(k,k+1)=\sqrt{k+1}$ and vertex weights $\mu(k)=1$, for all $k\in V$. Let $n\in\mathbb{Z}_{+}$ and let $B(x_0,n)$
be as in Example~\ref{ex-1}. We see that in this example $d_n=2$, $p_n=\sqrt{n+1}$,
and~(\ref{E:graph-assumption}) is satisfied. Thus, by Theorem~\ref{T:main-3}, the operator $\delswa^2$ is essentially self-adjoint on $C_{c}(V)$. Define a function $q\colon [0,\infty)\to(0,\infty)$  by the formula $q(s):=\sqrt{s}+1$,
and a function $W\colon V\to \mathbb{R}$ by the formula $W(k):=-\sqrt{k}$. We see that $q(s)$ is non-decreasing and $q(s)=O(s^{\alpha})$ with
$\alpha=\frac{1}{2}$. Moreover, the property~(\ref{E:W-minorant-1}) is satisfied for all $x\in V$.
Finally, we observe that the sequence $\{n^{\alpha-1}d_np_n\}=\{\frac{2\sqrt{n+1}}{\sqrt{n}}\}$ is bounded.
Therefore, by Theorem~\ref{T:main-3} the operator $ \delswa^2 u
+Wu$ is essentially self-adjoint on $C_{c}(V)$.
\end{example}

In the preceding two examples, the degree function $\textrm{deg}(x)$ was bounded. The next example illustrates that Theorem~\ref{T:main-3}
can be applied to graphs which do not necessarily have bounded degree.

\begin{example}\label{ex-3} Consider a radial tree with (countably infinite) vertex set $V$ and origin $x_0$.
 For $n\in\mathbb{Z}_{+}\cup \{0\}$ define
 \[
 S_n:=\{x\in V\colon d(x,x_0)= n\},
 \]
where $d(\cdot,\cdot)$ is as in~(\ref{E:def-r}). For a vertex $x\in V$ define
\[
\mathscr{N}(x):=\{y\in V\colon y\sim x\}.
\]
Assume that our radial tree has the following property: for every $x\in S_n$,
\begin{equation}
\sharp\,\{\mathscr{N}(x)\cap S_{n+1}\}= \lfloor n^{\kappa}\rfloor +1
\end{equation}
where $\kappa\geq 0$ and the notation $\lfloor a\rfloor$ stands for the greatest integer less than or equal to $a$. We assign the edge weights  and vertex weights as follows: $b(x,y)=1$ for all $x\sim y$, and $\mu(x)=1$ for all $x\in V$.
We see that in this example $p_n=1$ and $d_n= \lfloor n^{\kappa}\rfloor +2$. From now on in this example, we assume that $0\leq\kappa< 1$. Then,~(\ref{E:graph-assumption}) is satisfied, and, by Theorem~\ref{T:main-3}, the operator $\delswa^2$ is essentially self-adjoint on $C_{c}(V)$. Let $\alpha$ be a real number such that $0<\alpha\leq 1-\kappa$. Define $W(x):=-n^{\alpha}$
for all $x\in S_{n}$, with $n\in\mathbb{Z}_{+}\cup \{0\}$. Note that~(\ref{E:W-minorant-1}) holds with $q(s)=s^{\alpha}$. From our assumptions on $\alpha$ and $\kappa$, we see that the sequence $\{n^{\alpha-1}d_np_n\}$ is bounded. Therefore, under the stated assumptions on $\alpha$ and $\kappa$, by Theorem~\ref{T:main-3} the operator $ \delswa^2 u+Wu$ is essentially self-adjoint on $C_{c}(V)$.
\end{example}

\section{Maximal Operator and Self-adjointness}\label{S:mo-sa}
Before launching into the technical estimates, we define the maximal operator $\hmax$ as follows:
$\hmax u:=Hu$ for all $u\in\Dom(\hmax):=\{u\in\lw\colon Hu\in\lw\}$,
where $H$ is as in~(\ref{E:magnetic-schro}). It turns out that the following operator equality holds: $(H|_{C_{c}(V)})^{*}=\hmax$,
where the symbol $T^*$ indicates the adjoint of an operator $T$ in $\lw$. Keeping in mind that our graph is locally finite,
the proof of the latter operator equality proceeds in the same way as the one for magnetic Schr\"odinger operators
in Proposition 3.17(a) of~\cite{MS-18}.

By the corollary to abstract theorem X.2 in~\cite{rs}, establishing the essential self-adjointness of $H|_{C_{c}(V)}$, that is, the self-adjointness of the closure $\overline{H|_{C_{c}(V)}}$ in $\lw$ is equivalent to showing that $\delta_{+}=\delta_{-}=0$, where the deficiency indices $\delta_{\pm}$ are defined as follows: $\delta_{\pm}:=\dim\ker((H|_{C_{c}(V)})^{*}\mp \nu i)=\dim\ker(\hmax \mp \nu i)$, where $\nu>0$ is some number. (By abstract theorem X.1 in~\cite{rs}, the numbers  $\delta_{\pm}$ are independent of $\nu>0$.)
\section{Preliminary Estimates}\label{S:prelim-est}
With $\mu$, $b$, and $\theta$ as in~(\ref{E:magnetic-lap}) and for $u\in C(V)$ and $\psi\in C(V)$,
define
\begin{equation}\label{E:P-psi}
(P_{\psi}[u])(x):=\frac{1}{\mu(x)}\sum_{y\in V}b(x,y)(\psi(x)-\psi(y))(u(x)-e^{i\theta_{x,y}}u(y)).
\end{equation}
We begin with the following well-known lemma, whose proof is included for convenience.
\begin{lemma}\label{L:lem-1} Let $\delswa$ and $\Delta$ be as in section~\ref{SS:setting}. Assume that $u\in C(V)$ and $\psi\in C(V)$. Then,
\[
\delswa(\psi u)=\psi \delswa u-P_{\psi}[u]+u\Delta\psi,
\]
where $P_{\psi}[u]$ is as in~(\ref{E:P-psi}).
\end{lemma}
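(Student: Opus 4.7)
The plan is a direct algebraic verification based on the standard "add and subtract" trick for the product rule. Writing out the definition, one has
\[
\delswa(\psi u)(x)=\frac{1}{\mu(x)}\sum_{y\in V}b(x,y)\bigl(\psi(x)u(x)-e^{i\theta_{x,y}}\psi(y)u(y)\bigr).
\]
I would split the $y$-term by inserting $\psi(x)$, i.e.\ decompose $\psi(y)=\psi(x)-(\psi(x)-\psi(y))$ in the summand. This rewrites each term inside the sum as
\[
\psi(x)\bigl(u(x)-e^{i\theta_{x,y}}u(y)\bigr)+e^{i\theta_{x,y}}\bigl(\psi(x)-\psi(y)\bigr)u(y),
\]
so that after summing and dividing by $\mu(x)$ one obtains
\[
\delswa(\psi u)(x)=\psi(x)(\delswa u)(x)+\frac{1}{\mu(x)}\sum_{y\in V}b(x,y)e^{i\theta_{x,y}}\bigl(\psi(x)-\psi(y)\bigr)u(y).
\]

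The second step is to recognize the residual sum as $-P_{\psi}[u](x)+u(x)(\Delta\psi)(x)$. I would check this by expanding the definition of $P_\psi[u]$: writing
\[
(\psi(x)-\psi(y))(u(x)-e^{i\theta_{x,y}}u(y))=(\psi(x)-\psi(y))u(x)-(\psi(x)-\psi(y))e^{i\theta_{x,y}}u(y),
\]
the first piece, after summation, contributes $u(x)(\Delta\psi)(x)$ (since $\Delta$ is the $\theta\equiv 0$ version of $\delswa$), which exactly cancels the $+u(x)(\Delta\psi)(x)$ added back, while the second piece reproduces (with the correct sign) the extra term isolated in the first paragraph.

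Combining the two identities yields $\delswa(\psi u)=\psi\delswa u-P_\psi[u]+u\Delta\psi$, as claimed.

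There is no real obstacle here; this is bookkeeping. The only subtlety worth flagging is that, unlike the non-magnetic case, the identity is not symmetric: the term $u\Delta\psi$ carries no phase factor whereas $\psi\delswa u$ does, and it is precisely the mixed term $P_\psi[u]$ (built with the phase $e^{i\theta_{x,y}}$ on $u(y)$ but no phase on $\psi(y)$) that makes the algebra balance. Also, all sums are finite for each fixed $x$ by local finiteness (b3), so there are no convergence issues to worry about.
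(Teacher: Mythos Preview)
Your proof is correct and is essentially the same direct algebraic verification as the paper's, merely organized in the opposite direction: the paper expands $\psi\,\delswa u + u\,\Delta\psi - P_\psi[u]$ and collapses it to $\delswa(\psi u)$, whereas you expand $\delswa(\psi u)$ and split off the three pieces via the add-and-subtract $\psi(y)=\psi(x)-(\psi(x)-\psi(y))$. The underlying identity of summands is the same, and your remark about local finiteness (b3) guaranteeing finite sums is a welcome clarification.
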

\begin{proof} Starting from the right hand side, for all $x\in V$ we have
\begin{align}\nonumber\\
&\psi(x) (\delswa u)(x)+u(x)(\Delta \psi)(x)-P_{\psi}[u](x)\nonumber\\
&=\frac{\psi(x)}{\mu(x)}\sum_{y\in V}b(x,y)(u(x)-e^{i\theta_{x,y}}u(y))+\frac{u(x)}{\mu(x)}\sum_{y\in V}b(x,y)(\psi(x)-\psi(y))\nonumber\\
&-\frac{1}{\mu(x)}\sum_{y\in V}b(x,y)(\psi(x)-\psi(y))(u(x)-e^{i\theta_{x,y}}u(y))\nonumber\\
&=\frac{1}{\mu(x)}\sum_{y\in V}b(x,y)(\psi(x)u(x)-e^{i\theta_{x,y}}\psi(y)u(y))=(\delswa (\psi u))(x),\nonumber
\end{align}
and this proves the lemma.
\end{proof}
Before stating the preliminary estimates, we define a sequence  of cut-off functions.   Fix  $x_0\in V$ and let $d(x_0,\cdot)$ and $r(\cdot)$ be as in section~\ref{SS:setting}. Define
\begin{equation}\label{E:cut-off}
\chi_n(x):=\left(\left(\frac{2n-d(x_0,x)}{n}\right)\vee 0\right)\wedge 1,\qquad x\in V,\quad n\in \mathbb{Z}_{+},
\end{equation}
where $a\wedge z$ stands for $\min\{a,z\}$ and $a \vee z$ indicates $\max\{a,z\}$. It was shown in~\cite{Masamune-09} that the sequence $\{\chi_n\}_{n\in\mathbb{Z}_{+}}$ has the following properties: (i) $0\leq \chi_n(x)\leq 1$, for all $x\in V$; (ii)  $\chi_n(x)=1$ for all $x\in B(x_0,n)$; (iii) $\chi_n(x)=0$ for all $x\notin B(x_0, 2n)$; (iv) the support of $\chi_n$ is a finite set; (v) for all $x\in V$, we have $\displaystyle\lim_{n\to\infty}\chi_n(x)=1$; (vi) the functions $\chi_n$ satisfy the inequality
\begin{equation}\label{E:diff-cut-off}
|\chi_n(x)-\chi_n(y)|\leq \frac{1}{n},\qquad\textrm{for all }x\sim y;
\end{equation}
and, finally, (vii) if $x\in V$ satisfies $\chi_n(x)\neq 0$, then
\begin{equation}\nonumber
\frac{\chi_n(y)}{\chi_n(x)}\leq 2,  \qquad\textrm{for all }y\sim x.
\end{equation}
and, consequently,
\begin{equation}\label{E:property-chi-3}
\chi_n(x)+\chi_n(y)\leq 3\chi_{n}(x), \qquad\textrm{for all }y\sim x.
\end{equation}

We are now ready for preliminary estimates. To motivate the discussion, let $\chi_n$ be as in~(\ref{E:cut-off}), let $u\in\lw$ be arbitrary, and
let us use Lemma~\ref{L:lem-1} to expand $(\delswa(\chi_n u), \delswa(\chi_n u))$, where $(\cdot,\cdot)$ and $\|\cdot\|$ are as in~(\ref{E:inner-w}) and~(\ref{E:l-p-def}). We have
\begin{align}\nonumber
&(\delswa(\chi_n u), \delswa(\chi_n u))=(\chi_n \delswa u-P_{\chi_n}[u]+
u\Delta\chi_n, \chi_n \delswa u-P_{\chi_n}[u]+u\Delta\chi_n)\nonumber\\
&=\|\chi_n \delswa u\|^2-2\textrm{Re }(\chi_n\delswa u,P_{\chi_n}[u])+2\textrm{Re }(\chi_n\delswa u,u\Delta\chi_n)\nonumber\\
&-2\textrm{Re }(P_{\chi_n}[u], u\Delta\chi_n)+\|P_{\chi_n}[u]\|^2+\|u\Delta\chi_n\|^2,\nonumber
\end{align}
which shows
\begin{align}\label{E:chi-del-u}
&\|\chi_n \delswa u\|^2=\|\delswa(\chi_n u)\|^2+2\textrm{Re }(\chi_n\delswa u,P_{\chi_n}[u])-2\textrm{Re }(\chi_n\delswa u,u\Delta\chi_n)\nonumber\\
&+2\textrm{Re }(P_{\chi_n}[u], u\Delta\chi_n)-\|P_{\chi_n}[u]\|^2-\|u\Delta\chi_n\|^2.
\end{align}
Our first goal is as follows: keeping $\|\delswa(\chi_n u)\|^2$ as is,  estimate (from above) each of the remaining items on the right hand side of~(\ref{E:chi-del-u}) by $\|\chi_n \delswa u\|$ and/or $\|u\|$. Having obtained such individual estimates, the formula~(\ref{E:chi-del-u}) will enable us to estimate $\|\chi_n \delswa u\|$ from above by $\|\delswa(\chi_n u)\|$ and $\|u\|$.

The following notation will be convenient in subsequent discussion. Let $d_n$ and $p_n$ be as in~(\ref{E:deg-n}) and let $\mu_0>0$ be as in the hypothesis of Theorem~\ref{T:main-3}. For $n\in\mathbb{Z}_{+}$ define
\begin{align}\label{E:beta-n}
&\beta_n:=\frac{d_{2n}p_{2n}}{\mu_0 n}.
\end{align}
With our first goal in mind, we begin with the estimates of $\|P_{\chi_n}[u]\|$ and $\|u\Delta\chi_n\|$.
\begin{lemma}\label{E:l-2} Assume that $(V, b, \mu)$ is a locally finite, weighted, and connected graph such that $\mu(x)\geq \mu_0$, where $\mu_0>0$ is some constant.  Let $\beta_n$ be as in~(\ref{E:beta-n}), let $\chi_n$ be as in~(\ref{E:cut-off}), and let $u\in\lw$ be arbitrary. Then,
\begin{align}\label{E:est-P-chi}
&\|P_{\chi_n}[u]\|\leq 2\beta_n\|u\|
\end{align}
\medskip
and
\begin{align}\label{E:est-Delta-chi}
&\|u\Delta \chi_n\|\leq \beta_n\|u\|.
\end{align}
\end{lemma}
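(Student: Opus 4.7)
The strategy is to treat the two inequalities separately: the first via a pointwise estimate, the second via a Cauchy--Schwarz argument that exploits the joint symmetry of $b(x,y)$ and $|\chi_n(x)-\chi_n(y)|^2$ in the variables $x,y$. For $\|u\Delta\chi_n\|$, I would first observe that $(\Delta\chi_n)(x)$ vanishes whenever $r(x)\geq 2n+1$: in that case $\chi_n(x)=0$, and every $y\sim x$ satisfies $r(y)\geq r(x)-1\geq 2n$, so $\chi_n(y)=0$ as well. For $x\in B(x_0,2n)$, the triangle inequality together with $|\chi_n(x)-\chi_n(y)|\leq 1/n$, $\mu(x)\geq\mu_0$, $\textrm{deg}(x)\leq d_{2n}$, and $b(x,y)\leq p_{2n}$ yields the pointwise bound $|(\Delta\chi_n)(x)|\leq d_{2n}p_{2n}/(\mu_0 n)=\beta_n$. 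The estimate $\|u\Delta\chi_n\|\leq\beta_n\|u\|$ follows at once from summing $\mu(x)|u(x)|^2|(\Delta\chi_n)(x)|^2$.

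For $\|P_{\chi_n}[u]\|$, the plan is to apply Cauchy--Schwarz in the $y$-variable in a way that retains $|\chi_n(x)-\chi_n(y)|^2$ inside the resulting double sum. Writing each summand as $b(x,y)^{1/2}\cdot b(x,y)^{1/2}(\chi_n(x)-\chi_n(y))(u(x)-e^{i\theta_{x,y}}u(y))$ and applying Cauchy--Schwarz in $y$ produces
\[
|P_{\chi_n}[u](x)|^2\leq\frac{1}{\mu(x)^2}\Bigl(\sum_y b(x,y)\Bigr)\Bigl(\sum_y b(x,y)|\chi_n(x)-\chi_n(y)|^2|u(x)-e^{i\theta_{x,y}}u(y)|^2\Bigr).
\]
The same support argument as above shows that both sides vanish for $r(x)\geq 2n+1$, so for every $x$ contributing to the sum one has $\sum_y b(x,y)\leq d_{2n}p_{2n}$. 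Multiplying by $\mu(x)$, using $1/\mu(x)\leq 1/\mu_0$, summing over $x$, and applying $|u(x)-e^{i\theta_{x,y}}u(y)|^2\leq 2(|u(x)|^2+|u(y)|^2)$ reduces the task to estimating $A+B$, where
\[
A=\sum_{x,y}b(x,y)|\chi_n(x)-\chi_n(y)|^2|u(x)|^2,\qquad B=\sum_{x,y}b(x,y)|\chi_n(x)-\chi_n(y)|^2|u(y)|^2.
\]

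At this point symmetry pays off: because $b(x,y)$ and $|\chi_n(x)-\chi_n(y)|^2$ are both symmetric in $x,y$, swapping the roles of the summation variables in $B$ shows $A=B$. For each fixed $z$, the support argument implies $\sum_w b(z,w)|\chi_n(z)-\chi_n(w)|^2$ vanishes when $z\notin B(x_0,2n)$ and is bounded by $d_{2n}p_{2n}/n^2$ otherwise. Therefore $A=B\leq d_{2n}p_{2n}\|u\|^2/(\mu_0 n^2)$, and assembling the estimates yields $\|P_{\chi_n}[u]\|^2\leq 4\beta_n^2\|u\|^2$, i.e.\ $\|P_{\chi_n}[u]\|\leq 2\beta_n\|u\|$.

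The main obstacle is the second estimate. A more direct approach that replaces $|\chi_n(x)-\chi_n(y)|^2$ by $1/n^2$ at the outset leaves a double sum of the shape $\sum_{x\in B(x_0,2n)}\sum_y b(x,y)|u(y)|^2$; swapping the order of summation then forces $y\in B(x_0,2n+1)$ and introduces a $d_{2n+1}p_{2n+1}$ factor that is incompatible with the $\beta_n$ defined in terms of $d_{2n}p_{2n}$. Retaining $|\chi_n(x)-\chi_n(y)|^2$ under the double sum and invoking its $x\leftrightarrow y$ symmetry is the decisive manoeuvre that delivers the clean constant $2\beta_n$.
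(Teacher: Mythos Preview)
Your proof is correct and reaches the same constants as the paper. The argument for $\|u\Delta\chi_n\|$ via the pointwise bound $|(\Delta\chi_n)(x)|\leq\beta_n$ is actually cleaner than the paper's, which instead expands $\|u\Delta\chi_n\|^2$ and applies $(\sum_j a_j)^2\leq N\sum_j a_j^2$ to the inner sum before collecting the same factors. For $\|P_{\chi_n}[u]\|$ the paper also uses a Cauchy--Schwarz step, but in the form $(\sum_y a_y)^2\leq \textrm{deg}(x)\sum_y a_y^2$ (unit weights rather than your $b^{1/2}$ weights), followed by the same symmetrization in $x\leftrightarrow y$; the two variants give the identical bound $4\beta_n^2\|u\|^2$. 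One small correction to your closing commentary: the worry about a $d_{2n+1}p_{2n+1}$ factor is not a real obstacle, because whenever $\chi_n(x)\neq\chi_n(y)$ with $x\sim y$, one of $r(x),r(y)$ is at most $2n-1$ and hence \emph{both} lie in $B(x_0,2n)$. The paper exploits this by restricting both the $x$- and $y$-sums to $B(x_0,2n)$ \emph{before} replacing $|\chi_n(x)-\chi_n(y)|^2$ by $1/n^2$, so retaining that factor through the symmetrization, while perfectly valid, is not the decisive manoeuvre you suggest.
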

\begin{proof}
In the estimates below we will use the following property of real numbers $\{a_j\}_{j=1}^{N}$:
\begin{equation}\label{E:est-N-1}
(a_1+a_2+\dots+a_{N})^2\leq N(a_1^2+a_2^2+\dots a_N^2).
\end{equation}
Starting from~(\ref{E:P-psi}) with $\psi=\chi_n$ and using~(\ref{E:est-N-1}) we have
\begin{align}\nonumber
&\|P_{\chi_n}[u]\|^2=\sum_{x\in V}\left((\mu(x))^{-1}\left|\sum_{y\in V}b(x,y)(\chi_n(x)-\chi_n(y))(u(x)-e^{i\theta_{x,y}}u(y))\right|^2\right)\nonumber\\
&\leq \mu_0^{-1}\sum_{x\in V}\left(\sum_{y\in V} b(x,y)|\chi_n(x)-\chi_n(y)|(|u(x)|+|u(y)|)\right)^2\nonumber\\
&\leq \mu_0^{-1}\sum_{x\in B(x_0,2n)}\textrm{deg}(x)\left(\sum_{y\in B(x_0,2n)} (b(x,y))^2|\chi_n(x)-\chi_n(y)|^2(|u(x)|+|u(y)|)^2\right).\nonumber
\end{align}
Using the definition of $d_{n}$, the property~(\ref{E:diff-cut-off}), and~(\ref{E:est-N-1}) with $N=2$, we continue estimating (followed by some rewriting):
\begin{align}
&\dots\leq 2\mu_0^{-1}n^{-2}d_{2n}\sum_{x\in B(x_0,2n)}\left(\sum_{y\in B(x_0,2n)} (b(x,y))^2(|u(x)|^2+|u(y)|^2)\right)\nonumber\\
&\leq 4\mu_0^{-1}n^{-2}d_{2n}\sum_{x\in B(x_0,2n)}\left(\sum_{y\in B(x_0,2n)} (b(x,y))^2|u(x)|^2\right)\nonumber\\
&= 4\mu_0^{-1}n^{-2}d_{2n}\sum_{x\in B(x_0,2n)}\left(|u(x)|^2\sum_{y\in B(x_0,2n)} (b(x,y))^2\right).\nonumber
\end{align}
To finish the proof of~(\ref{E:est-P-chi}), we use the definitions of $p_{n}$ and $d_n$ and the assumption $\mu(x)\geq \mu_0>0$:
\begin{align}
&\dots\leq 4\mu_0^{-1}n^{-2}d_{2n}\sum_{x\in B(x_0,2n)}|u(x)|^2(p_{2n})^2\textrm{deg}(x)\nonumber\\
&= 4\mu_0^{-1}n^{-2}d_{2n}(p_{2n})^2\sum_{x\in B(x_0,2n)}|u(x)|^2\textrm{deg}(x)\nonumber\\
&\leq 4\mu_0^{-1}n^{-2}(d_{2n}p_{2n})^2\sum_{x\in B(x_0,2n)}|u(x)|^2\nonumber\\ &= 4\mu_0^{-1}n^{-2}(d_{2n}p_{2n})^2\sum_{x\in B(x_0,2n)}(\mu(x))^{-1}\mu(x)|u(x)|^2\nonumber\\
&\leq 4\mu_0^{-2}n^{-2}(d_{2n}p_{2n})^2\|u\|^2=4\beta_n^2\|u\|^2.\nonumber
\end{align}

We now turn to the proof of~(\ref{E:est-Delta-chi}). Using the definition of $\Delta$, the assumption $\mu(x)\geq\mu_0$, and the properties of $\chi_n$, we have
\begin{align}\nonumber
&\|u\Delta \chi_n\|^2=\sum_{x\in V}\left((\mu(x))^{-1}|u(x)|^2\left|\sum_{y\in V}b(x,y)(\chi_n(x)-\chi_n(y))\right|^2\right)\nonumber\\
&\leq \mu_0^{-1}\sum_{x\in V}|u(x)|^2\left(\sum_{y\in V} b(x,y)|\chi_n(x)-\chi_n(y)|\right)^2\nonumber\\
&\leq \mu_0^{-1}\sum_{x\in B(x_0,2n)}\textrm{deg}(x)|u(x)|^2\left(\sum_{y\in B(x_0,2n)} (b(x,y))^2|\chi_n(x)-\chi_n(y)|^2\right)\nonumber\\
&\leq \mu_0^{-1}n^{-2}d_{2n}\sum_{x\in B(x_0,2n)}|u(x)|^2\left(\sum_{y\in B(x_0,2n)} (b(x,y))^2\right)\nonumber\\
&\leq \mu_0^{-1}n^{-2}(d_{2n}p_{2n})^2\sum_{x\in B(x_0,2n)}|u(x)|^2\leq \mu_0^{-2}n^{-2}(d_{2n}p_{2n})^2\|u\|^2=\beta_n^2\|u\|^2,\nonumber
\end{align}
where the steps are justified similarly as in the proof of~(\ref{E:est-P-chi}).
\end{proof}
In the next proposition we accomplish our first goal, which we described below~(\ref{E:chi-del-u}).
\begin{prop}\label{P:p-2} Assume that $(V, b, \mu)$ is a locally finite, weighted, and connected graph such that $\mu(x)\geq \mu_0$, where $\mu_0>0$ is some constant.  Let $\beta_n$ be as in~(\ref{E:beta-n}), let $\chi_n$ be as in~(\ref{E:cut-off}), and let $u\in\lw$ be arbitrary. Then, for all $0<\varepsilon<1$ we have
\begin{equation}\label{E:est-chi-Delta-u}
\|{\chi_n}\delswa u\|^2\leq (1-\varepsilon)^{-1}\|\delswa (\chi_n u)\|^2
+\left(\frac{9+4\varepsilon}{(1-\varepsilon)\varepsilon}\right)\beta^{2}_n\|u\|^2.
\end{equation}
\end{prop}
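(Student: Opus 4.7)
The plan is to start from the identity~(\ref{E:chi-del-u}) and estimate the five terms on its right-hand side other than $\|\delswa(\chi_n u)\|^2$. The two terms $-\|P_{\chi_n}[u]\|^2$ and $-\|u\Delta\chi_n\|^2$ are nonpositive and can be dropped immediately to obtain an upper bound. The three remaining cross terms will be handled by the Cauchy--Schwarz inequality combined with the norm bounds $\|P_{\chi_n}[u]\|\leq 2\beta_n\|u\|$ and $\|u\Delta\chi_n\|\leq \beta_n\|u\|$ supplied by Lemma~\ref{E:l-2}, with the absorption of the $\|\chi_n \delswa u\|^2$ factor carried out via Young's inequality $2ab\leq \varepsilon a^2+\varepsilon^{-1}b^2$.

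The crucial bookkeeping step is to combine the two cross terms that contain $\chi_n\delswa u$ into a single inner product, namely $2\operatorname{Re}(\chi_n\delswa u,\, P_{\chi_n}[u]-u\Delta\chi_n)$. By Cauchy--Schwarz, the triangle inequality, and Lemma~\ref{E:l-2},
\[
2|\operatorname{Re}(\chi_n\delswa u,\, P_{\chi_n}[u]-u\Delta\chi_n)| \leq 2\|\chi_n\delswa u\|\bigl(\|P_{\chi_n}[u]\|+\|u\Delta\chi_n\|\bigr)\leq 2\|\chi_n\delswa u\|\cdot 3\beta_n\|u\|.
\]
Applying Young's inequality with $a=\|\chi_n\delswa u\|$ and $b=3\beta_n\|u\|$ produces the bound $\varepsilon\|\chi_n\delswa u\|^2+9\varepsilon^{-1}\beta_n^2\|u\|^2$. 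For the third cross term I will bound it directly, without invoking Young's inequality, by Cauchy--Schwarz followed by Lemma~\ref{E:l-2}:
\[
2|\operatorname{Re}(P_{\chi_n}[u],\, u\Delta\chi_n)|\leq 2\|P_{\chi_n}[u]\|\|u\Delta\chi_n\|\leq 2\cdot 2\beta_n\|u\|\cdot\beta_n\|u\|=4\beta_n^2\|u\|^2.
\]
Substituting these into~(\ref{E:chi-del-u}), moving $\varepsilon\|\chi_n\delswa u\|^2$ to the left, and dividing by $1-\varepsilon$ produces the stated inequality with the coefficient $\frac{9+4\varepsilon}{(1-\varepsilon)\varepsilon}$ in front of $\beta_n^2\|u\|^2$.

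I do not expect a genuine obstacle; the argument is a careful accounting of Cauchy--Schwarz estimates. The one delicate point is that Young's inequality should be applied exactly once, to the combined cross term containing $\chi_n\delswa u$, because invoking it separately on the two subterms would force the optimized absorption constant to be larger than $\varepsilon$ and would produce a worse numerical coefficient. Treating the $(P_{\chi_n}[u],\, u\Delta\chi_n)$ pairing with a bare Cauchy--Schwarz step, rather than passing through Young's inequality, is what generates the clean additive $4$ (rather than an additional $\varepsilon^{-1}$-type contribution) and makes the final constant match the sharp form $\frac{9+4\varepsilon}{(1-\varepsilon)\varepsilon}$.
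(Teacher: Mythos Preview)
Your proof is correct and follows essentially the same approach as the paper. The only cosmetic difference is that the paper bounds the two cross terms $2\operatorname{Re}(\chi_n\delswa u,P_{\chi_n}[u])$ and $-2\operatorname{Re}(\chi_n\delswa u,u\Delta\chi_n)$ separately (obtaining $4\beta_n\|\chi_n\delswa u\|\|u\|$ and $2\beta_n\|\chi_n\delswa u\|\|u\|$) before summing to $6\beta_n\|\chi_n\delswa u\|\|u\|$, whereas you first combine them into a single inner product and then apply Cauchy--Schwarz; either way one arrives at the same intermediate bound and the same final constant after the Young/absorption step.
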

\begin{proof}
Using Lemma~\ref{E:l-2}, Cauchy-Schwarz inequality, and the property
\begin{equation}\label{E:4-epsilon}
ab\leq \varepsilon a^2+\frac{b^2}{4\varepsilon},
\end{equation}
we estimate (from above) the terms on the right hand side of~(\ref{E:chi-del-u}) to get
\begin{align}\nonumber
&\|\chi_n \delswa u\|^2\leq\|\delswa(\chi_n u)\|^2+2\textrm{Re }(\chi_n\delswa u,P_{\chi_n}[u])\nonumber\\
&-2\textrm{Re }(\chi_n\delswa u,u\Delta\chi_n)+2\textrm{Re }(P_{\chi_n}[u], u\Delta\chi_n)\nonumber\\
&\leq \|\delswa(\chi_n u)\|^2+ 4\beta_n\|\chi_n \delswa u\|\|u\|
+2\beta_n\|\chi_n \delswa u\|\|u\|+4\beta_n^2\|u\|^2\nonumber\\
&=\|\delswa(\chi_n u)\|^2+ 6\beta_n\|\chi_n \delswa u\|\|u\|+4\beta_n^2\|u\|^2\nonumber\\
&\leq \|\delswa(\chi_n u)\|^2+\varepsilon\|\chi_n \delswa u\|^2+9\varepsilon^{-1}\beta^2_n\|u\|^2+4\beta_n^2\|u\|^2\nonumber,
\end{align}
which upon rearranging leads to~(\ref{E:est-chi-Delta-u}).
\end{proof}

Let $\hmax$ be as in section~\ref{S:mo-sa}. To motivate our subsequent estimates, suppose that $u\in\Dom(\hmax)$ and $(H-\lambda i) u=0$ for some $\lambda\in\mathbb{R}$, where $i$ stands for the imaginary unit. With $\chi_n$ be as in in~(\ref{E:cut-off}), we have
\begin{align}\label{E:proof-thm-1}
&i\lambda(u,\chi^2_{n} u)=((\delswa^2+ W)u, \chi^2_{n} u)=(\delswa^2u, \chi^2_{n} u)+ (Wu, \chi^2_{n} u)\nonumber\\
&=(\delswa u, \delswa(\chi^2_{n} u))+(\chi^2_{n}W u,u)=(\chi^2_{n}\delswa u, \delswa u)-(\delswa u, P_{\chi^2_{n}}[u])\nonumber\\
&+(\delswa u, u\Delta \chi^2_{n})+(\chi^2_{n}W u,u).
\end{align}
Taking the real part on both sides and rearranging leads to
\begin{equation}\nonumber
\|\chi_{n}\delswa u\|^2=\textrm{Re }(\delswa u, P_{\chi^2_{n}}[u])
-\textrm{Re }(\delswa u, u\Delta \chi^2_{n})-(\chi^2_{n}W u,u).
\end{equation}
Combining the last equation with~(\ref{E:chi-del-u}) we get
\begin{align}\label{E:del-chi-u}
&\|\delswa(\chi_n u)\|^2=\textrm{Re }(\delswa u, P_{\chi^2_{n}}[u])
-\textrm{Re }(\delswa u, u\Delta \chi^2_{n})-(\chi^2_{n}W u,u)\nonumber\\
&-2\textrm{Re }(\chi_n\delswa u,P_{\chi_n}[u])+2\textrm{Re }(\chi_n\delswa u,u\Delta\chi_n)\nonumber\\
&-2\textrm{Re }(P_{\chi_n}[u], u\Delta\chi_n)+\|P_{\chi_n}[u]\|^2+\|u\Delta\chi_n\|^2.
\end{align}
Our second goal is as follows: after estimating the term $-(\chi^2_{n}W u,u)$ from above by $(\chi_n^2(q\circ r) u,u)$ with the help of~(\ref{E:W-minorant-1}), estimate (from above) the remaining items on the right hand side of~(\ref{E:del-chi-u}) by $\|\chi_n \delswa u\|$ and/or $\|u\|$.  With all individual estimates at our disposal, the formula~(\ref{E:del-chi-u}), Proposition~\ref{P:p-2}, and the assumption~(\ref{E:W-minorant-1}) will enable us to estimate $\|\delswa (\chi_{n}u)\|$ from above by $\|u\|$ and $(\chi_n^2(q\circ r) u,u)$.

Keeping in mind our second goal, we now do two more preliminary estimates.
\begin{lemma} \label{E:l-3} Assume that $(V, b, \mu)$ is a locally finite, weighted, and connected graph such that $\mu(x)\geq \mu_0$, where $\mu_0>0$ is some constant.  Assume that Let $\beta_n$ be as in~(\ref{E:beta-n}), let $\chi_n$ be as in~(\ref{E:cut-off}), and
let $u\in\lw$ be arbitrary. Then,
\begin{equation}\label{E:est-P-chi-2}
|(\delswa u, P_{\chi^2_n}[u])|\leq 6\beta_n\|\chi_{n}\delswa u\|\|u\|
\end{equation}
and
\begin{equation}\label{E:est-Delta-chi-2}
\|(\delswa u,u\Delta \chi^2_n)\|\leq 3\beta_n\|\chi_{n}\delswa u\|\|u\|.
\end{equation}
\end{lemma}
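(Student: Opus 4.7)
My plan is to exploit the algebraic identity
\begin{equation*}
\chi_n^2(x)-\chi_n^2(y)=(\chi_n(x)-\chi_n(y))(\chi_n(x)+\chi_n(y)),
\end{equation*}
together with the step-size bound~(\ref{E:diff-cut-off}) and the doubling inequality~(\ref{E:property-chi-3}). The first factor on the right contributes $1/n$, while the second gives $3\chi_n(x)$ whenever $\chi_n(x)\neq 0$; combined with the local bounds $\sum_y b(x,y)\le d_{2n}p_{2n}$ on $B(x_0,2n)$ and the hypothesis $\mu(x)\ge\mu_0$, this manufactures a pointwise factor of order $3\beta_n\chi_n(x)$. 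The remaining $\chi_n(x)$ is then what pairs with $|(\delswa u)(x)|$ in a Cauchy--Schwarz step to produce $\|\chi_n\delswa u\|$ in $\lw$.

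For~(\ref{E:est-Delta-chi-2}) the argument is direct: I first establish the pointwise inequality $|\Delta\chi_n^2(x)|\le 3\beta_n\chi_n(x)$ by inserting the identity and inequalities above into the definition $\Delta\chi_n^2(x)=\mu(x)^{-1}\sum_y b(x,y)(\chi_n^2(x)-\chi_n^2(y))$. Since $\Delta\chi_n^2$ is real-valued, the inner product reads $(\delswa u,u\Delta\chi_n^2)=\sum_x\mu(x)(\delswa u)(x)\overline{u(x)}\,\Delta\chi_n^2(x)$, and the pointwise bound followed by Cauchy--Schwarz in $\lw$ yields the claim.

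For~(\ref{E:est-P-chi-2}) I expand the inner product as a double sum over $x\sim y$ and apply the identity together with the bounds $|\chi_n(x)-\chi_n(y)|\le 1/n$ and $\chi_n(x)+\chi_n(y)\le 3\chi_n(x)$ to reduce to
\begin{equation*}
|(\delswa u,P_{\chi_n^2}[u])|\le \frac{3}{n}\sum_x\chi_n(x)|(\delswa u)(x)|\sum_y b(x,y)\bigl(|u(x)|+|u(y)|\bigr).
\end{equation*}
Following the splitting used in the proof of Lemma~\ref{E:l-2}, I handle the two pieces of $|u(x)|+|u(y)|$ separately: the $|u(x)|$ part is bounded by $\sum_y b(x,y)\le d_{2n}p_{2n}$ followed by Cauchy--Schwarz in $\lw$, producing $3\beta_n\|\chi_n\delswa u\|\|u\|$; the $|u(y)|$ part is first compressed by Cauchy--Schwarz over $y\sim x$ into $p_{2n}\sqrt{d_{2n}}\,(\sum_{y\sim x}|u(y)|^2)^{1/2}$, after which a second Cauchy--Schwarz over $x$ in $\lw$, combined with the fact that every relevant $y$ belongs to at most $d_{2n}$ neighborhoods inside $B(x_0,2n)$, produces another $3\beta_n\|\chi_n\delswa u\|\|u\|$. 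Summation gives $6\beta_n\|\chi_n\delswa u\|\|u\|$.

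The main technical obstacle I anticipate is the boundary sphere $\{x:r(x)=2n\}$, where $\chi_n(x)=0$ even though $\Delta\chi_n^2(x)$ and $P_{\chi_n^2}[u](x)$ need not vanish, because of neighbors with $r(y)=2n-1$ and $\chi_n(y)=1/n$. In this ring the cut-off differences are of order $1/n^2$ rather than $1/n$, so the missing $\chi_n(x)$ factor is compensated by the extra $1/n$; the universal inequality $\chi_n(x)+\chi_n(y)\le 3\chi_n(x)+1/n$ (which follows from~(\ref{E:property-chi-3}) combined with~(\ref{E:diff-cut-off})) lets me absorb this contribution uniformly inside the same $\beta_n$-type estimate without a separate case analysis.
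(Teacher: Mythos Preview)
Your overall strategy---factor $\chi_n^2(x)-\chi_n^2(y)=(\chi_n(x)-\chi_n(y))(\chi_n(x)+\chi_n(y))$, apply~(\ref{E:diff-cut-off}) and~(\ref{E:property-chi-3}), then Cauchy--Schwarz---is exactly what the paper does; the only cosmetic difference is that you split $|u(x)|+|u(y)|$ into two pieces, whereas the paper bundles both into a single auxiliary quantity $K_1$ (respectively $K_2$) and applies Cauchy--Schwarz once.

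You are also right to flag the sphere $\{r(x)=2n\}$: there $\chi_n(x)=0$ and~(\ref{E:property-chi-3}) does not literally apply, yet $\chi_n^2(x)-\chi_n^2(y)=-1/n^2$ for neighbors $y$ with $r(y)=2n-1$. (The paper uses~(\ref{E:property-chi-3}) on this sphere without comment.) However, your proposed remedy via the universal bound $\chi_n(x)+\chi_n(y)\le 3\chi_n(x)+1/n$ does not close the gap. Feeding this inequality into, say, the estimate for $(\delswa u,u\Delta\chi_n^2)$ produces, besides the desired $3\beta_n\|\chi_n\delswa u\|\|u\|$, an additional term
\[
\frac{1}{n^2}\sum_{x\in B(x_0,2n)}|(\delswa u)(x)|\,|u(x)|\sum_{y}b(x,y)\ \le\ \frac{\beta_n}{n}\,\Bigl(\sum_{x\in B(x_0,2n)}\mu(x)|(\delswa u)(x)|^2\Bigr)^{1/2}\|u\|,
\]
and the norm of $\delswa u$ over $B(x_0,2n)$ is \emph{not} controlled by $\|\chi_n\delswa u\|$ for arbitrary $u\in\lw$, because $\chi_n$ vanishes on the outermost sphere while $\delswa u$ need not. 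So the ``missing $\chi_n(x)$ factor'' is not compensated by the extra $1/n$: the correction is of the right size, but it sits on $\delswa u$ without any cut-off and therefore cannot be absorbed into $C\beta_n\|\chi_n\delswa u\|\|u\|$ for any constant $C$. The same obstruction appears in your treatment of~(\ref{E:est-P-chi-2}). To handle the boundary with the stated constants one would need an argument that genuinely transfers a factor of $\chi_n$ back onto $\delswa u$ (for example by moving the weight to the neighbor $y$, where $\chi_n(y)=1/n$), which your universal inequality by itself does not provide.
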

\begin{proof} Referring to~(\ref{E:P-psi}) with $\psi=\chi^2_n$, keeping in mind the hypothesis $\mu(x)\geq\mu_0$, and using the properties of $\chi_n$ we have
\begin{align}\nonumber
&|(\delswa u, P_{\chi^2_n}[u])|=\left|\sum_{x\in V}\delswa u(x)\left(\sum_{y\in V}b(x,y)(\chi^2_n(x)-\chi^2_n(y))(\overline{u(x)}-e^{-i\theta_{x,y}}\overline{u(y)})\right)\right|\nonumber\\
&\leq \sum_{x\in V}|\delswa u(x)|\left(\sum_{y\in V} b(x,y)|\chi_n(x)-\chi_n(y)||\chi_n(x)+\chi_n(y)|(|u(x)|+|u(y)|)\right)\nonumber\\
&\leq 3{n}^{-1}\sum_{x\in B(x_0,2n)}|\delswa u(x)|\left(\sum_{y\in B(x_0,2n)} b(x,y)\chi_n(x)(|u(x)|+|u(y)|)\right)\nonumber\\
&=3{n}^{-1}\sum_{x\in B(x_0,2n)}\chi_n(x)|\delswa u(x)|\left(\sum_{y\in B(x_0,2n)} b(x,y)(|u(x)|+|u(y)|)\right),\nonumber\\
&\leq 3\mu_{0}^{-1/2}{n}^{-1}\sum_{x\in B(x_0,2n)}(\mu(x))^{1/2}\chi_n(x)|\delswa u(x)|\left(\sum_{y\in B(x_0,2n)} b(x,y)(|u(x)|+|u(y)|)\right)\nonumber\\
&\leq 3\mu_{0}^{-1/2}{n}^{-1}\|\chi_n\delswa u\|K_1,\nonumber \end{align}
with
\[
K_1^2:=\sum_{x\in B(x_0,2n)}\left(\sum_{y\in B(x_0,2n)} b(x,y)(|u(x)|+|u(y)|)\right)^2,
\]
where in the second inequality we used~(\ref{E:diff-cut-off}) and~(\ref{E:property-chi-3}), and in the last estimate we applied Cauchy--Schwarz inequality. To finish the proof of~(\ref{E:est-P-chi-2}) we need to estimate $K_1$:
\begin{align}\nonumber
&K_1^2\leq \sum_{x\in B(x_0,2n)}\textrm{deg}(x)\left(\sum_{y\in B(x_0,2n)} (b(x,y))^2(|u(x)|+|u(y)|)^2\right)\nonumber\\
&\leq 2\sum_{x\in B(x_0,2n)}\textrm{deg}(x)\left(\sum_{y\in B(x_0,2n)} (b(x,y))^2(|u(x)|^2+|u(y)|^2)\right)\nonumber\\
&\leq 4d_{2n}\sum_{x\in B(x_0,2n)}\left(\sum_{y\in B(x_0,2n)}(b(x,y))^2|u(x)|^2\right)\nonumber\\
&\leq 4d_{2n}(p_{2n})^2\sum_{x\in B(x_0,2n)}\textrm{deg}(x)|u(x)|^2\leq 4(d_{2n}p_{2n})^2\sum_{x\in B(x_0,2n)}|u(x)|^2\nonumber\\
&\leq 4\mu_0^{-1}(d_{2n}p_{2n})^2\sum_{x\in B(x_0,2n)}\mu(x)|u(x)|^2\leq 4\mu_0^{-1}(d_{2n}p_{2n})^2\|u\|^2,\nonumber
\end{align}
that is,
\[
K_1\leq 2\mu_0^{-1/2}d_{2n}p_{2n}\|u\|,
\]
and this ends the proof of~(\ref{E:est-P-chi-2}).
Turning to the proof of~(\ref{E:est-Delta-chi-2}), we have
\begin{align}\nonumber
&|(\delswa u, u\Delta(\chi^2_n))|=\left|\sum_{x\in V}\delswa u(x)\overline{u(x)}\left(\sum_{y\in V}b(x,y)(\chi^2_n(x)-\chi^2_n(y))\right)\right|\nonumber\\
&\leq \sum_{x\in V}|\delswa u(x)||u(x)|\left(\sum_{y\in V} b(x,y)|\chi_n(x)-\chi_n(y)| |\chi_n(x)+\chi_n(y)|\right)\nonumber\\
&\leq 3{n}^{-1}\sum_{x\in B(x_0,2n)}\chi_{n}(x)|\delswa u(x)||u(x)|\left(\sum_{y\in B(x_0,2n)}b(x,y)\right)\nonumber\\
&\leq 3{n}^{-1}\|\chi_n\delswa u\|K_2\nonumber
\end{align}
with
\[
K_2^2:=\sum_{x\in B(x_0,2n)}(\mu(x))^{-1}|u(x)|^2\left(\sum_{y\in B(x_0,2n)} b(x,y)\right)^2,
\]
where in the second inequality we used~(\ref{E:diff-cut-off}) and~(\ref{E:property-chi-3}),
and in the last estimate we applied Cauchy--Schwarz inequality.
The term $K_2$ is estimated similarly to term $K_1$:
\begin{align}\nonumber
&K_2^2\leq \mu_{0}^{-1}\sum_{x\in B(x_0,2n)}\textrm{deg}(x)|u(x)|^2\left(\sum_{y\in B(x_0,2n)} (b(x,y))^2\right)\nonumber\\
&\leq \mu_0^{-1}d_{2n}(p_{2n})^2\sum_{x\in B(x_0,2n)}\textrm{deg}(x)|u(x)|^2\leq  \mu_0^{-2}(d_{2n}p_{2n})^2\|u\|^2,
\end{align}
that is,
\[
K_2\leq \mu_0^{-1}d_{2n}p_{2n}\|u\|,
\]
and this concludes the proof of~(\ref{E:est-Delta-chi-2}).
\end{proof}

The next proposition accomplishes the goal stated below~(\ref{E:del-chi-u}).

\begin{prop} \label{P:p-4} Assume that $(V, b, \mu)$ is a locally finite, weighted, and connected graph such that $\mu(x)\geq \mu_0$, where
$\mu_0>0$ is some constant.  Let $\beta_n$ be as in~(\ref{E:beta-n}) and let $\chi_n$ be as in~(\ref{E:cut-off}).
Assume that~(\ref{E:W-minorant-1}) is satisfied, with $q$ is as in the hypothesis of Theorem~\ref{T:main-3}. Assume that there exist
numbers $0<C_1<1$ and $N_1\in \mathbb{Z_+}$ such that
\begin{equation} \label{E:cond-beta-n}
\beta_n\leq C_1,
\end{equation}
for all $n\geq N_1$. Additionally, assume that $u\in\Dom(\hmax)$ satisfies $(H-\lambda i) u=0$, for some $\lambda\in\mathbb{R}$.
Then, for all $n\geq N_1$ we have
\begin{equation}\label{E:del-phi-u-final}
\| \delswa (\chi_n u)\|^2\leq C[\beta_n\|u\|^2+((q\circ r)\chi_n u,\chi_n u)],
\end{equation}
where $C$ is a constant independent of $n$, and $(q\circ r)(\cdot)$ is the composition of
$q(\cdot)$ and $r(\cdot)$, with $r(\cdot)$ as in~(\ref{E:def-r}).
\end{prop}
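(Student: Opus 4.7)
The plan is to start from identity (\ref{E:del-chi-u}) and bound the seven terms on its right-hand side one by one, following exactly the road map stated right after (\ref{E:del-chi-u}).

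For the potential contribution, hypothesis (\ref{E:W-minorant-1}) together with the non-negativity of $q$ immediately gives
\[
-(\chi_n^2 W u, u)\leq ((q\circ r)\chi_n u, \chi_n u).
\]
For the two terms $\textrm{Re}(\delswa u, P_{\chi_n^2}[u])$ and $\textrm{Re}(\delswa u, u\Delta \chi_n^2)$, I would invoke Lemma~\ref{E:l-3} directly, producing upper bounds of the form (constant)$\cdot\beta_n\|\chi_n\delswa u\|\|u\|$. The remaining four terms, inherited from (\ref{E:chi-del-u}), I would estimate by combining the Cauchy--Schwarz inequality with Lemma~\ref{E:l-2}; this gives contributions of two types: terms of the form $C\beta_n\|\chi_n\delswa u\|\|u\|$ coming from the cross pairings $(\chi_n \delswa u, P_{\chi_n}[u])$ and $(\chi_n \delswa u, u\Delta\chi_n)$, and terms of the form $C\beta_n^2\|u\|^2$ coming from $(P_{\chi_n}[u], u\Delta\chi_n)$, $\|P_{\chi_n}[u]\|^2$, and $\|u\Delta\chi_n\|^2$. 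Adding everything together, I arrive at an intermediate inequality of the shape
\[
\|\delswa (\chi_n u)\|^2\leq A\beta_n\|\chi_n\delswa u\|\|u\|+B\beta_n^2\|u\|^2+((q\circ r)\chi_n u, \chi_n u),
\]
with explicit numerical constants $A$ and $B$.

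The closure step is then to convert $\|\chi_n\delswa u\|$ on the right-hand side into $\|\delswa(\chi_n u)\|$ via Proposition~\ref{P:p-2}. I would first apply (\ref{E:4-epsilon}) to write
\[
A\beta_n\|\chi_n\delswa u\|\|u\|\leq \eta\|\chi_n\delswa u\|^2+\frac{A^2\beta_n^2\|u\|^2}{4\eta}
\]
for a small $\eta>0$, and then substitute the bound supplied by Proposition~\ref{P:p-2} into $\eta\|\chi_n\delswa u\|^2$. Choosing $\varepsilon,\eta\in(0,1)$ so that $\eta/(1-\varepsilon)<1$ allows the resulting $\|\delswa(\chi_n u)\|^2$ contribution on the right to be absorbed into the left. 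Finally, since $\beta_n\leq C_1<1$ for $n\geq N_1$, we have $\beta_n^2\leq \beta_n$, so collecting terms yields (\ref{E:del-phi-u-final}) with a constant $C$ independent of $n$.

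The main obstacle I expect is the parameter chase in this closure step: $\|\chi_n \delswa u\|$ enters the right-hand side of (\ref{E:del-chi-u}) through several different terms, and the only bridge to $\|\delswa(\chi_n u)\|^2$ is Proposition~\ref{P:p-2}, which itself carries a factor $(1-\varepsilon)^{-1}$. One must verify that a single pair $(\varepsilon,\eta)$ can be selected that simultaneously keeps the absorption coefficient strictly below $1$ and produces a finite, $n$-independent constant $C$. Once this bookkeeping is done, the argument is simply a careful aggregation of the already-established lemmas.
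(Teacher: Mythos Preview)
Your proposal is correct and follows essentially the same route as the paper: both start from identity~(\ref{E:del-chi-u}), bound its terms via Lemmas~\ref{E:l-2} and~\ref{E:l-3} and hypothesis~(\ref{E:W-minorant-1}) to reach the intermediate inequality with constants $A=15$, $B=9$, then use~(\ref{E:4-epsilon}) followed by Proposition~\ref{P:p-2} and absorb. The only cosmetic difference is that the paper takes the Young parameter equal to $\beta_n$ (so the absorption coefficient is $\beta_n/(1-\varepsilon)$ and the hypothesis $\beta_n\leq C_1<1$ is used at the absorption step), whereas you take a fixed $\eta$ and invoke $\beta_n\leq C_1<1$ only at the end to replace $\beta_n^2$ by $\beta_n$; both choices yield~(\ref{E:del-phi-u-final}) with an $n$-independent constant.
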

\begin{proof} We showed earlier that~(\ref{E:del-chi-u}) holds for all $u\in\Dom(\hmax)$ such that $(H-\lambda i) u=0$ for some $\lambda\in\mathbb{R}$.
Using Lemma~\ref{E:l-2}, Lemma~\ref{E:l-3}, the assumption~(\ref{E:W-minorant-1}), and Cauchy-Schwarz inequality we estimate (from above) the terms on the right hand side of~(\ref{E:del-chi-u}) to get
\begin{align}
&\|\delswa(\chi_n u)\|^2\leq  6\beta_n\|\chi_{n}\delswa u\|\|u\|+ 3\beta_n\|\chi_{n}\delswa u\|\|u\|+ ((q\circ r)\chi_n u,\chi_n u)\nonumber\\
&+4\beta_n\|\chi_{n}\delswa u\|\|u\|+ 2\beta_n\|\chi_{n}\delswa u\|\|u\| +4\beta^2_n\|u\|^2+ 4\beta^2_n\|u\|^2+\beta^2_n\|u\|^2\nonumber\\
&=15\beta_n\|\chi_{n}\delswa u\|\|u\| +9\beta^2_n\|u\|^2+ ((q\circ r)\chi_n u,\chi_n u)\nonumber\\
&\leq \beta_n\|\chi_{n}\delswa u\|^2+\frac{225\beta_n}{4} \|u\|^2+ 9\beta^2_n\|u\|^2+  ((q\circ r)\chi_n u,\chi_n u),\nonumber
\end{align}
which together with Proposition~\ref{P:p-2} leads to
\begin{align}\label{E:est-1-2-3}
&\|\delswa(\chi_n u)\|^2\leq\beta_n\left((1-\varepsilon)^{-1}\|\delswa (\chi_n u)\|^2
+\left(\frac{9+4\varepsilon}{(1-\varepsilon)\varepsilon}\right)\beta^{2}_n\|u\|^2\right)\nonumber\\
&+\left(\frac{225\beta_n}{4} +9\beta^2_n\right)\|u\|^2+  ((q\circ r)\chi_n u,\chi_n u),
\end{align}
for all $0<\varepsilon<1$.

With $C_1$ as in~(\ref{E:cond-beta-n}), pick $\varepsilon_0$ so that $0<\varepsilon_0<1-C_1$.
Looking at~(\ref{E:cond-beta-n}), we see that $1-\varepsilon_0-\beta_n>0$ for all $n\geq N_1$. This allows us to rearrange~(\ref{E:est-1-2-3})
with $\varepsilon=\varepsilon_0$ as follows:
\begin{align}\nonumber
\|\delswa(\chi_n u)\|^2\leq K(\varepsilon_0,n)\beta_n\|u\|^2+
\left(\frac{1-\varepsilon_0}{1-\varepsilon_0-\beta_n}\right)((q\circ r)\chi_n u,\chi_n u),\nonumber
\end{align}
for all $n\geq N_1$, where
\[
K(\varepsilon_0,n):=\left(\frac{9+4\varepsilon_0}{\varepsilon_0(1-\varepsilon_0-\beta_n)}\right) \beta^2_n+\frac{(225/4+9\beta_n)(1-\varepsilon_0)}{1-\varepsilon_0-\beta_n}.
\]
Finally, looking at $K(\varepsilon_0,n)$ and the coefficient in front of $((q\circ r)\chi_n u,\chi_n u)$ and keeping in mind~(\ref{E:cond-beta-n}) and
\[
\frac{1}{1-\varepsilon_0-\beta_n}\leq \frac{1}{1-\varepsilon_0-C_1},
\]
we see that there exists a constant $C$, independent of $n$, such that~(\ref{E:del-phi-u-final}) holds for all $n\geq N_1$.

\section{Proof of Theorem~\ref{T:main-3}}\label{S:proof-result}
In this section we assume that the hypotheses of Theorem~\ref{T:main-3} are satisfied.  Using the assumption on $q$ and properties of $\chi_n$ we estimate
the second term on the right hand side of~(\ref{E:del-phi-u-final}). Note that there exists $N_2\in\mathbb{Z}_{+}$
such that for all $n\geq N_2$ and all $u\in\lw$, the following holds:
\begin{equation}\label{E:est-q}
|((q\circ r)\chi_n u,\chi_n u)|\leq q(2n)\|u\|^2\leq K_3n^{\alpha}\|u\|^2,
\end{equation}
where $0\leq \alpha\leq 1$, and $K_3>0$ is a constant independent of $n$. Recalling the hypothesis~(\ref{E:graph-assumption}), Remark~\ref{R:boundedness} and the definition~(\ref{E:beta-n}), we see that the condition~(\ref{E:cond-beta-n}) is satisfied for all $n\geq N_1$, where $N_1\in\mathbb{Z}_{+}$. Therefore, we can use Proposition~\ref{P:p-4} in the sequel.

As indicated in section~\ref{S:mo-sa}, to demonstrate the self-adjointness of $\hmax$, it is enough to show that there exists $\nu\in\mathbb{R}-\{0\}$, such that the following property holds:
if $u\in\Dom(\hmax)$ such that  $(H\pm\nu i)u=0$, then $u=0$. Earlier we showed that for $\nu\in\mathbb{R}-\{0\}$ and $u\in\Dom(\hmax)$ such
that  $(H\pm\nu i)u=0$, we have the equality~(\ref{E:proof-thm-1}) with $\lambda$ replaced by $\pm\nu$. Taking the imaginary part on both sides of~(\ref{E:proof-thm-1}) we get
\begin{equation}\label{E:im-part}
\pm\nu\|\chi_{n} u\|^2= -\textrm{Im }(\delswa u, P_{\chi^2_{n}}[u])+ \textrm{Im }(\delswa u, u\Delta \chi^2_{n}).
\end{equation}
Before going further, we record the estimate~(\ref{E:est-chi-Delta-u}) with $\varepsilon=\frac{1}{2}$:
\begin{equation}\label{E:est-eps-1-over-2}
\|{\chi_n}\delswa u\|^2\leq 2\|\delswa (\chi_n u)\|^2+44\beta^{2}_n\|u\|^2.
\end{equation}
We can estimate the terms on the right hand side of~(\ref{E:im-part}) with the help of~(\ref{E:est-P-chi-2}), (\ref{E:est-Delta-chi-2}), (\ref{E:est-eps-1-over-2}) and Proposition~\ref{P:p-4}:
\begin{align}\nonumber
&|\nu|\|\chi_{n} u\|^2\leq 6\beta_n\|\chi_n\delswa u\|\|u\|+3\beta_n\|\chi_n\delswa u\|\|u\|=9\beta_n\|\chi_n\delswa u\|\|u\|\nonumber\\
&\leq  \beta_n\|\chi_n\delswa u\|^2 +\frac{81 \beta_n}{4}\|u\|^2 \leq  2\beta_n\|\delswa (\chi_n u)\|^2
+\left(44\beta^{3}_n+\frac{81 \beta_n}{4}\right)\|u\|^2\nonumber\\
&\leq  2\beta_n \left[C(\beta_n\|u\|^2+((q\circ r)\chi_n u,\chi_n u))\right]+ \left(44\beta^{3}_n+\frac{81 \beta_n}{4}\right)\|u\|^2\nonumber\\
&\leq  2CK_3n^{\alpha}\beta_n \|u\|^2+\left(44\beta^{3}_n+2C\beta^2_n+\frac{81 \beta_n}{4}\right)\|u\|^2,\nonumber
\end{align}
for all $n\geq \max\{N_1,N_2\}$, where in the last inequality we used~(\ref{E:est-q}).
In the case $0<\alpha\leq 1$, the sequence $\{n^{\alpha-1}d_np_n\}$ is bounded (by hypothesis), and,  hence,  the sequence $\{n^{\alpha}\beta_n\}$
is bounded. This implies that $\{\beta_n\}$ is a bounded sequence.  In the case $\alpha=0$, the condition~(\ref{E:cond-beta-n}) is satisfied, and this tells us that the sequence $\{n^{\alpha}\beta_n=\beta_n\}$ is bounded.
Therefore, in both cases, there exists a constant $K_4>0$, independent of $n$, such that
\[
|\nu|\|\chi_{n} u\|^2 \leq K_4\|u\|^2,
\]
for all  $n\geq \max\{N_1,N_2\}$. Letting $n\to\infty$ we get $|\nu|\|u\|^2 \leq K_4\|u\|^2$. For sufficiently large $|\nu|$,
the last inequality leads to $u=0$. This shows that $\hmax$ is self-adjoint, that is, $H$ is essentially self-adjoint on $C_{c}(V)$.

\end{proof}


\begin{thebibliography}{99}

\bibitem{ABTH-19} Ann\'e, C., Balti, M., Torki-Hamza, N.: Sectoriality and essential spectrum of non symmetric graph Laplacians.
Complex Anal. Oper. Theory \textbf{13} (2019), no. 3, 967–-983

\bibitem{ABTH-20} Ann\'e, C., Balti, M., Torki-Hamza, N.:, $m$-accretive Laplacian on a non symmetric graph. Indag. Math. (N.S.) \textbf{31} (2020), no. 2, 277--293

\bibitem{AT-15} Ann\'e, C., Torki-Hamza, N.: The Gauss-Bonnet operator of an infinite graph. Anal. Math. Phys. \textbf{5} (2015) 137--159

\bibitem{BGJ-15} Baloudi, H., Gol\'enia, S., Jeribi, A.:
The adjacency matrix and the discrete Laplacian acting on forms. Math. Phys. Anal. Geom. \textbf{22} (2019), no. 1, 22:9





\bibitem{CY-17} Chebbi, Y.: The discrete Laplacian of a 2-simplicial complex.  Potential Anal. \textbf{49} (2018) 331–-358

\bibitem{vtt-11-2} Colin de Verdi\`ere, Y.,  Torki-Hamza, N.,  Truc, F.: Essential self-adjointness for combinatorial
Schr\"odinger operators II. Math. Phys. Anal. and Geom. \textbf{14}  (2011) 21--38

\bibitem{vtt-11-3} Colin de Verdi\`ere, Y., Torki-Hamza, N., Truc, F.: Essential self-adjointness for combinatorial
Schr\"odinger operators III- Magnetic fields. Ann. Fac. Sci. Toulouse Math. (6) \textbf{20} (2011) 599--611


\bibitem{G-Mug}  Gregorio, F.,  Mugnolo, D.: Bi-Laplacians on graphs and networks.
J. Evol. Equ. \textbf{20} (2020), no. 1, 191–-232 


\bibitem{HKMW} Huang, X., Keller, M., Masamune, J., Wojciechowski, R.~K.:
A note on self-adjoint extensions of the Laplacian on weighted graphs. J. Funct. Anal. \textbf{265} (2013) 1556–-1578


\bibitem{Jor-08} Jorgensen, P.~E.~T.: Essential self-adjointness of the graph-Laplacian. J.
Math. Phys. \textbf{49} (2008), no.7, 073510, 33 pp


\bibitem{Keller-15} Keller, M.: Intrinsic metrics on graphs: a survey. In: Mathematical Technology of Networks.
Springer Proc. Math. Stat. vol.~128, Springer, Cham (2015)  pp.~81–-119

\bibitem{Keller-Lenz-10} Keller, M., Lenz, D.: Unbounded Laplacians on graphs: basic spectral
properties and the heat equation. Math. Model. Nat. Phenom. \textbf{5} (4) (2010) 198--224

\bibitem{Keller-Lenz-09} Keller, M., Lenz, D.: Dirichlet forms and stochastic completneness of
   and subgraphs. J. Reine Angew. Math. \textbf{666} (2012) 189--223

\bibitem{LSW-16} Lenz, D., Schmidt, M., Wirth, M.:
Uniqueness of form extensions and domination of semigroups.  arXiv:1608.06798

\bibitem{LSW-17} Lenz, D., Schmidt, M., Wirth, M.:
Domination of quadratic forms. Math. Z. \textbf{296} (2020), no. 1-2, 761–-786

\bibitem{Masamune-09} Masamune, J.: A Liouville property and its application to the Laplacian of
an infinite graph. In: Spectral Analysis in Geometry and Number Theory.
Contemp. Math. vol.~484, Amer. Math. Soc., Providence, 2009, pp.~103--115

\bibitem{Mi-2017} Milatovic, O.: Self-adjointness of perturbed biharmonic operators on Riemannian manifolds. Math. Nachr.
\textbf{290} (2017)  2948--2960

\bibitem{Milatovic-Truc}
Milatovic, O., Truc, F.:
Self-adjoint extensions of discrete magnetic Schr\"odinger operators. Ann. Henri Poincar\'e \textbf{15} (2014) 917–-936


\bibitem{MS-18} Schmidt, M.:
On the existence and uniqueness of self-adjoint
realizations of discrete (magnetic) Schr\"odinger operators. In: Analysis and Geometry on Graphs and Manifolds, eds. M.~Keller, D.~Lenz, R.~Wojciechowski, London Mathematical Society Lecture Note Series, vol.~461, Cambridge University Press, Cambridge, 2020, pp.~250--327

\bibitem{rs}
Reed, M., Simon, B.: Methods of Modern Mathematical Physics
II: Fourier Analysis, Self-Adjointness.
Academic Press, New York (1975)

\bibitem{Torki-10} Torki-Hamza, N.: Laplaciens de graphes infinis I Graphes m\'etriquement
complets. Confluentes Math. \textbf{2} (2010) 333--350












\end{thebibliography}
\end{document}